	\theoremstyle{plain}
	\newtheorem{theorem}{Theorem}
	\newtheorem{lemma}{Lemma}
	\theoremstyle{definition}
	\newtheorem{remark}{Remark}
    \newcommand{\prl}[1] 		{\!\left(#1\right)}
    \newcommand{\brl}[1] 		{\left[ #1 \right] }
    \newcommand{\crl}[1] 		{\left\{ #1 \right\} }
    \newcommand{\R}             {\mathbb{R}} 
 	\newcommand{\Sp}        		{\mathbb{S}} 
	\newcommand{\norm}[1] 		{\left \| #1 \right \|}
    \newcommand{\vectprod}[2]    { \tr{#1} #2}
    \newcommand{\tr}[1]         {{#1}^\mathrm{T}}
	\newcommand{\mat}[1] 		{\mathrm{\mathbf{#1}}}
	\newcommand{\vect}[1] 		{\mathrm{#1}}
	\newcommand{\vectbf}[1]		{\mathrm{\mathbf{#1}}}
    \newcommand{\twovec} [2] { \left[ \begin{array}{@{}c@{}} 
          #1 \\ #2 \end{array} \right] }
    \newcommand{\twovect} [2] { [\;#1, \;#2\;] }
    \newcommand{\threevecT} [3] { \left[ \begin{array}{@{}c@{}@{}c@{}@{}c@{}} 
          #1, & #2, & #3 \end{array} \right] }
    \newcommand{\twomatrix}[4]{ \left[\begin{array}{@{}cc@{}}
                                   #1 & #2\\
                                   #3 & #4\\
                                 \end{array}
                           \right]}
    \newcommand{\refeqn}[1]			{(\ref{#1})}
    \newcommand{\reffig}[1]			{Figure \ref{#1}}
	\newcommand{\reflem}[1]			{Lemma \ref{#1}}
    \newcommand{\ldf}				{  \: {\mathbf := }  \: }
    \newcommand{\sqz}[1]            {\! #1 \!}
\newcommand{\ctrd}[1]			{\mathrm{c}\prl{#1}} 
\newcommand{\NT}[2]              {\mathrm{N}_{#1}^{#2}}  
\newcommand{\TT}[2]              {\mathrm{T}_{#1}^{#2}}  
\newcommand{\TpS}[1]            {\mat{P}\prl{#1}} 
\newcommand{\RotMat}            {\mat{R}} 
\begin{document}

\title{On the Optimality of Napoleon Triangles}
\markright{On the Optimality of Napoleon Triangles }
\author{Omur Arslan\thanks{The authors are with the Department of Electrical and Systems Engineering, University of Pennsylvania, Philadelphia, PA 19104. E-mail: \{omur, kod\}@seas.upenn.edu} ~and Daniel E. Koditschek$^*$ }

\maketitle


\begin{abstract}
An elementary geometric construction known as Napoleon's theorem produces an equilateral triangle built on the sides of any  initial triangle: 
the centroids of each equilateral triangle meeting the original sides, all outward or all inward, comprise the vertices of the new equilateral triangle. 
In this note we observe that two Napoleon iterations yield triangles with useful optimality properties. 
Two inner transformations result in a (degenerate) triangle whose  vertices coincide at the original centroid. 
Two outer transformations yield an  equilateral triangle whose vertices are closest to the original in the sense of minimizing the sum of the three squared distances. 
\end{abstract}

\section{Introduction.}
\label{sec:Introduction}

In elementary geometry, one way of constructing an equilateral triangle from any given triangle is as follows: in a plane the centroids of  equilateral triangles erected, either all externally or all internally, on the sides of the given triangle form an equilateral triangle, illustrated in \reffig{fig.FermatTorricelliNapoleon} \cite{coxeter1996geometry}.
This result is generally referred to as \emph{Napoleon's theorem}, notwithstanding its dubious origins --- see \cite{grunbaum_AMM2012} for a detailed history of the theorem.
We will refer to these constructions as \emph{the outer and inner Napoleon transformations} and the associated equilateral triangles as \emph{the outer and inner Napoleon triangles} of the original triangle, respectively.
Conversely,  given  its outer and inner Napoleon triangles in position (i.e. they are oppositely oriented and   have the same centroid), the original triangle is uniquely determined \cite{wetzel_AMM1992}.
A fascinating application of Napoleon triangles is the planar tessellation used by Escher: a plane can be tiled using congruent copies of the hexagon defined by the vertices of  any triangle and its outer Napoleon triangle, known as \emph{Escher's theorem} \cite{rigby_MM1991}.    

\begin{figure}[h]
\centering
\includegraphics[width=0.8\textwidth]{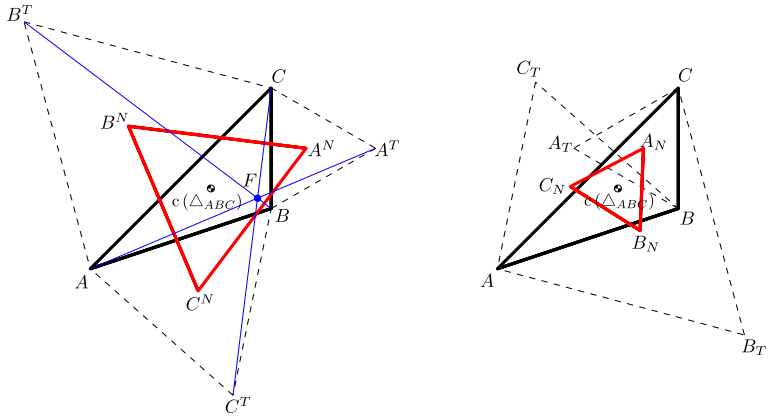}
\caption{An illustration of (left) the Fermat point $F$, outer Torricelli configuration $\triangle_{A^TB^TC^T}$ and outer Napoleon triangle  $\triangle_{A^NB^NC^N}$, and (right) inner Torricelli configuration $\triangle_{A_TB_TC_T}$ and inner Napoleon triangle $\triangle_{A_NB_NC_N}$ of a triangle $\triangle_{ABC}$. 
Note that centroids of Torricelli configurations, Napoleon triangles and the original triangle all coincide, i.e. $\ctrd{\triangle_{ABC}} = \ctrd{\triangle_{A^TB^TC^T}} = \ctrd{\triangle_{A^NB^NC^N}} = \ctrd{\triangle_{A_TB_TC_T}} = \ctrd{\triangle_{A_NB_NC_N}}$.}
\label{fig.FermatTorricelliNapoleon}
\end{figure}

Equilaterals built on the sides of a triangle make a variety of appearances in the classical literature. 
Torricelli uses this construction to locate Fermat's point minimizing the sum of distances to the vertices of a given triangle, now called \emph{the Fermat-Torricelli problem} \cite{kupitz_martini_spirova_JOTA2013}.
The unique solution of this problem is known as \emph{the Fermat-Torricelli point} of the given triangle, located as follows \cite{gueron_tessler_AMM2002}.
If an internal angle of the triangle is greater than $120^\circ$, then the Fermat point is at that obtuse vertex.
Otherwise,  the three lines joining  opposite vertices of the original triangle and externally erected triangles are concurrent, and they intersect at the Fermat point, see \reffig{fig.FermatTorricelliNapoleon}.
The triangle defined by the new vertices of the erected equilateral triangles is referred  to as \emph{the Torricelli configuration} \cite{martini_weissbach_GD1999, hajja_martini_spirova_BAG2008}. 

In this paper we demonstrate some remarkable, but not immediately obvious, optimality properties of twice iterated Napoleon triangles.
First, two composed inner Napoleon transformations of a triangle collapse the original to a point located at its centroid which, by definition, minimizes the sum of squared distances to the vertices of the given triangle.           
Surprisingly, two composed outer Napoleon transformations yield an equilateral triangle optimally aligned with the original by virtue of minimizing the sum of squared distances between the paired vertices.
More precisely, for any triangle $\triangle_{ABC}$ with the vertices \footnote{Here, $\R$ denotes the set of real numbers, and $\R^d$ is the $d$-dimensional Euclidean space.} $A,B,C \in \R^d$,  we will say that the triangle $\triangle_{A'B'C'}$ is  an { \em optimally aligned } equilateral triangle of $\triangle_{ABC}$ if it solves the following constrained optimization problem:  
\begin{equation}\label{eq.Optimization}
\begin{split}
\text{minimize } & \quad \norm{A - A'}^2 + \norm{B - B'}^2 + \norm{C - C'}^2 \\
\text{subject to}& \quad \norm{A' - B'}^2 = \norm{A'-C'}^2 = \norm{B' - C'}^2  
\end{split}
\end{equation}
where $A',B', C' \in \R^d$ and $\norm{.}$ denotes the standard Euclidean norm on $\R^d$.
As we show below, this optimization problem has a unique solution so long as $A$, $B$, $C$ are not collinear.

\section{Torricelli and Napoleon Transformations.}
\label{sec:Notation}

For any ordered triple $\vectbf{x} = \tr{\threevecT{\vect{x}_1}{\vect{x}_2}{\vect{x}_3}} \in \R^{3d}$, let $\RotMat_{\vectbf{x}}$ denote the rotation matrix corresponding to a counter-clockwise rotation by $\pi/2$ in the plane, defined by orthonormal vectors $\vect{n}$ and  $\vect{t}$, in which the triangle $\triangle_{\vectbf{x}}$ formed by $\vectbf{x}$ is positively oriented (i.e. its vertices in counter-clockwise order follow the sequence $\ldots \rightarrow 1\rightarrow 2 \rightarrow 3 \rightarrow 1 \rightarrow \ldots$), \footnote{$\tr{\mat{A}}$ denotes the transpose of matrix $\mat{A}$.} 
\begin{equation}\label{eq.RotMat}
\RotMat_{\vectbf{x}} \ldf \twovect{\vect{n}}{\vect{t}} 
\brl{
\begin{array}{rr}
0 & -1 \\ 1 & 0
\end{array}}
\tr{\twovect{\vect{n}}{\vect{t}}}, 
\end{equation}
where \footnote{For any trivial triangle $\triangle_\vectbf{x}$ all of whose vertices are located at the same point we fix $\RotMat_{\vectbf{x}} = \mat{0}$ by setting $\frac{\vect{x}}{\norm{\vect{x}}} = \vect{0}$ whenever $\vect{x} = 0$.}
\begin{equation} \label{eq.nt}
\vect{n} \ldf \! \left \{ 
\begin{array}{@{}l@{}@{}l@{}}
\frac{\vect{x}_2 - \vect{x}_1}{\norm{\vect{x}_2 - \vect{x}_1}_2} & \text{, if } \vect{x_1} \neq \vect{x}_2,\\ 
\frac{\vect{x}_3 - \vect{x}_2}{\norm{\vect{x}_3 - \vect{x}_2}_2} & \text{, otherwise,} 
\end{array}
\right. \,\,\,
\vect{t} \ldf \! \left \{ 
\begin{array}{@{}l@{}@{}l@{}}
\in \crl{\vect{z} \in \Sp^{d-1} | \vectprod{\vect{n}}{\vect{z}} = 0 } & \text{, if  $\vectbf{x}$ is collinear},\\
 \TpS{\vect{n}}   \frac{\vect{x}_3 - \vect{x}_1}{\norm{\vect{x}_3 - \vect{x}_1}_2} & \text{, otherwise,}
\end{array}
\right. \!\!
\end{equation}
where  $\TpS{\vect{n}} \ldf \mat{I}_{d} - \vect{n}\tr{\vect{n}}$ is the projection onto  $T_{\vect{n}}\Sp^{d-1}$ (the tangent space of $\Sp^{d-1}$ at point $\vect{n} \in \Sp^{d-1}$), and $\mat{I}_{d}$ is the $d \times d$ identity matrix.
Note that $\triangle_{\vectbf{x}}$ is both positively and negatively oriented if $\vectbf{x}$ is collinear. 
Consequently, to define a plane containing such $\vectbf{x}$ we select an arbitrary vector $\,\vect{t}$  perpendicular to $\,\vect{n}$ in \refeqn{eq.nt}.
It is also convenient to have $\ctrd{\vectbf{x}}$ denote the centroid of $\triangle_\vectbf{x}$, i.e. $\ctrd{\vectbf{x}} \ldf \frac{1}{3}\sum_{i =1}^{3} \vect{x}_i$.

In general, the Torricelli and Napoleon transformations of three points in  Euclidean $d$-space can be defined based on their original planar definitions in a 2-dimensional subspace of $\R^d$ containing $\vectbf{x}$.
That is to say, for any  $\vectbf{x} \in \R^{3d}$, select a 2-dimensional subspace of $\R^d$ containing $\vectbf{x}$, and then construct the erected triangles on the side of $\triangle_{\vectbf{x}}$ in this subspace to obtain the Torricelli and Napoleon transformations of $\vectbf{x}$.
Accordingly, let $\TT{\pm}{}:\R^{3d} \rightarrow \R^{3d}$ and $\NT{\pm}{}:\R^{3d} \rightarrow \R^{3d}$ denote the Torricelli and Napoleon transformations where  the sign, $+$ and $-$, determines the type of the transformation, inner  and outer, respectively.
One can write  closed-form expressions of the Torricelli and Napoleon transformations as:

\begin{lemma}
The Torricelli and Napoleon transformations of any triple $\vectbf{x} \in \R^{3d}$ on a plane containing $\vectbf{x}$ are, respectively, given by \footnote{Here, $\otimes$ denotes the Kronecker product \cite{Graham_1981}.} 

\noindent
\begin{align}
\TT{\pm}{}\prl{\vectbf{x}} &= \prl{\frac{1}{2}\mat{K} \pm \frac{\sqrt{3}}{2} \prl{\mat{I}_3 \otimes \RotMat_{\vectbf{x}}}\mat{L}} \vectbf{x}, \label{eq.Torricelli}\\
\NT{\pm}{}\prl{\vectbf{x}} &= \frac{1}{3}\prl{\mat{K}\vectbf{x} + \TT{\pm}{}\prl{\vectbf{x}}}, 
\label{eq.Napoleon}
\end{align}
where
\begin{equation} \label{eq.KLmat}
\mat{K} = \brl{\begin{array}{@{\hspace{1mm}}c@{\hspace{2mm}}c@{\hspace{2mm}}c@{\hspace{1mm}}}
0 & 1 & 1 \\
1 & 0 & 1 \\
1 & 1 & 0
\end{array}} \otimes \mat{I}_d 
\quad  
\text{ and }
\quad 
\mat{L} = \brl{\begin{array}{@{}rrr@{}}
0 & -1 & 1 \\
1 & 0 & -1 \\
-1 & 1 & 0
\end{array}} \otimes \mat{I}_d. 
\end{equation} 
\end{lemma}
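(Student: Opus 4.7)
The plan is to unpack the block-matrix formulas geometrically and verify that each of the three coordinates on the right-hand side agrees with the classical side-wise construction. Since all the action happens in a single plane containing $\vectbf{x}$, fix the orthonormal frame $(\vect{n},\vect{t})$ from \refeqn{eq.nt} and let $\RotMat_{\vectbf{x}}$ be the associated $\pi/2$ counter-clockwise rotation as in \refeqn{eq.RotMat}; by construction $\triangle_{\vectbf{x}}$ lies in $\mathrm{span}\crl{\vect{n},\vect{t}}$ and is positively oriented there.

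First I would inspect the two block matrices. Multiplying out, $\mat{K}\vectbf{x} = \tr{\threevecT{\vect{x}_2+\vect{x}_3}{\vect{x}_1+\vect{x}_3}{\vect{x}_1+\vect{x}_2}}$, so the $i$-th block of $\frac{1}{2}\mat{K}\vectbf{x}$ is precisely the midpoint $\vect{m}_i$ of the side of $\triangle_{\vectbf{x}}$ opposite $\vect{x}_i$. Similarly $\mat{L}\vectbf{x} = \tr{\threevecT{\vect{x}_3-\vect{x}_2}{\vect{x}_1-\vect{x}_3}{\vect{x}_2-\vect{x}_1}}$ produces, in the $i$-th block, the oriented edge vector $\vect{e}_i$ along the side opposite $\vect{x}_i$, traversed in the cyclic order inherited from the positive orientation of $\triangle_{\vectbf{x}}$.

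Second, I would record the elementary fact that the apex of the equilateral triangle built on a segment of length $\ell$ is obtained from the segment's midpoint by moving a distance $\frac{\sqrt{3}}{2}\ell$ along the perpendicular bisector. Writing this perpendicular direction as $\RotMat_{\vectbf{x}}\vect{e}_i/\norm{\vect{e}_i}$, the apex becomes $\vect{m}_i \pm \tfrac{\sqrt{3}}{2}\RotMat_{\vectbf{x}}\vect{e}_i$, since the rotation is an isometry. Checking orientation on any concrete positively oriented triangle (say $\vect{x}_1,\vect{x}_2,\vect{x}_3$ at the standard counter-clockwise positions) confirms that the $+$ sign rotates the edge vector toward the interior of $\triangle_{\vectbf{x}}$ (inner Torricelli apex), and the $-$ sign toward the exterior (outer Torricelli apex). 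Assembling the three apexes in one block vector gives
\[
\TT{\pm}{}(\vectbf{x}) = \tfrac{1}{2}\mat{K}\vectbf{x} \pm \tfrac{\sqrt{3}}{2}\prl{\mat{I}_3\otimes\RotMat_{\vectbf{x}}}\mat{L}\vectbf{x},
\]
which is \refeqn{eq.Torricelli}. The Kronecker structure $\mat{I}_3\otimes \RotMat_{\vectbf{x}}$ simply applies the same rotation block-wise to each edge vector.

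Third, for \refeqn{eq.Napoleon}, the $i$-th Napoleon vertex is by definition the centroid of the equilateral triangle with vertices $\vect{x}_j$, $\vect{x}_k$, and the corresponding Torricelli apex (with $\{i,j,k\}=\{1,2,3\}$), hence
\[
\NT{\pm}{}(\vectbf{x})_i = \tfrac{1}{3}\prl{\vect{x}_j + \vect{x}_k + \TT{\pm}{}(\vectbf{x})_i} = \tfrac{1}{3}\prl{(\mat{K}\vectbf{x})_i + \TT{\pm}{}(\vectbf{x})_i},
\]
which is exactly \refeqn{eq.Napoleon} block-by-block. Finally, I would note that the collinear and trivial cases require no separate argument: the formula \refeqn{eq.nt} chooses a planar frame for collinear triples so the construction above still makes sense in that plane, and the convention $\RotMat_{\vectbf{x}}=\mat{0}$ when $\vect{x}_1=\vect{x}_2=\vect{x}_3$ correctly collapses $\TT{\pm}{}$ and $\NT{\pm}{}$ to the common point. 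The only real obstacle is bookkeeping the sign/orientation convention so that $+$ matches inner and $-$ matches outer; once one concrete positively oriented triangle is checked, the Kronecker form makes the general case automatic.
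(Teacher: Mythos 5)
Your proposal is correct and follows essentially the same route as the paper: locate each Torricelli apex on the perpendicular bisector of the corresponding side, at distance $\frac{\sqrt{3}}{2}$ times the side length via the rotation $\RotMat_{\vectbf{x}}$, then obtain each Napoleon vertex as the centroid of the erected triangle, which gives \refeqn{eq.Napoleon} block-by-block. The only difference is presentational — you unpack all three blocks of $\mat{K}$ and $\mat{L}$ and treat the collinear/trivial conventions explicitly, where the paper verifies one representative vertex and appeals to symmetry.
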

\begin{proof}
One can locate the new vertex of an  equilateral triangle, inwardly or outwardly, constructed on one side of $\bigtriangleup_{\vectbf{x}}$ in the plane containing $\vectbf{x}$ using different geometric properties of equilateral triangles.
We find it convenient to use the perpendicular bisector of the  corresponding side of $\bigtriangleup_{\vectbf{x}}$, the line   passing through its midpoint and being perpendicular to it, such that  the new vertex is on this bisector and a proper distance away from the side  of $\triangle_{\vectbf{x}}$.

For instance, let $\vectbf{y} = \tr{\brl{\vect{y}_1, \vect{y}_2, \vect{y}_3}} = \TT{+}{}\prl{\vectbf{x}}$. 
Consider the side of $\triangle_\vectbf{x}$ joining $\vect{x}_1 $ and $\vect{x}_2$, using the bisector, $\vect{b}_{12} \ldf \frac{1}{2} \prl{\vect{x}_1 + \vect{x}_2} $, to locate the new vertex, $\vect{y}_3$, of inwardly erected triangle on this side  as
\begin{equation}
\vect{y}_3 = \vect{b}_{12} + \frac{\sqrt{3}}{2}\RotMat_{\vectbf{x}}\prl{\vect{x}_2 - \vect{x}_1},
\end{equation}
where $\RotMat_{\vectbf{x}}$ \refeqn{eq.RotMat} defines a counter-clockwise rotation by $\frac{\pi}{2}$ in the plane where $\vectbf{x}$ is positively oriented.
Note that the height of an equilateral triangle from any side is $\frac{\sqrt{3}}{2}$ times its side length. 
Hence, by symmetry, one can conclude \refeqn{eq.Torricelli}. 

Given a Torricelli configuration $\vectbf{y} = \tr{\brl{\vect{y}_1, \vect{y}_2, \vect{y}_3}} = \TT{\pm}{}\prl{\vectbf{x}}$, by definition, the vertices of associated Napoleon triangle $\vectbf{z} = \tr{\brl{\vect{z}_1, \vect{z}_2, \vect{z}_3}} = \NT{\pm}{}\prl{\vectbf{x}}$ are given by
\begin{equation}
\vect{z}_1 = \frac{1}{3}\prl{\vect{y}_1 + \vect{x}_2 + \vect{x}_3}, \,
\vect{z}_2 = \frac{1}{3}\prl{\vect{x}_1 + \vect{y}_2 + \vect{x}_3}  \text{ and } \,
\vect{z}_3 = \frac{1}{3}\prl{\vect{x}_1 + \vect{x}_2 + \vect{y}_3},
\end{equation}
which is equal to \refeqn{eq.Napoleon}, and so the result follows. 
\end{proof}
Note that the Torricelli and Napoleon transformations of $\vectbf{x}$ are unique  if and only if $\vectbf{x} \in \R^{3d}$ is non-collinear. 
If, contrarily, $\vectbf{x}$ is collinear, then $\triangle_\vectbf{x}$ is both positively and negatively oriented and  for $d \geq 3$ there is more than one 2-dimensional subspace of $\R^d$ containing $\vectbf{x}$.

\begin{remark}[\hspace{-0.1mm}\cite{wetzel_AMM1992}]
For any $\vectbf{x}=\tr{\brl{\vect{x}_1, \vect{x}_2, \vect{x}_3}} \in \R^{3d}$, the centroid of the Torricelli configuration 
 $\vectbf{y} = \tr{\brl{\vect{y}_1, \vect{y}_2, \vect{y}_3}} = \TT{\pm}{}\prl{\vectbf{x}}$,  the Napoleon configuration $\vectbf{z} = \NT{\pm}{}\prl{\vectbf{x}}$ and the original triple $\vectbf{x}$ all coincide,
\begin{equation}
\ctrd{\vectbf{x}} = \ctrd{\vectbf{y}} = \ctrd{\vectbf{z}},
\end{equation}
and  distances between the associated elements of $\vectbf{x}$ and $\vectbf{y}$ are all the same, i.e.   for any $i \neq j \in \crl{1,2,3}$ 
\begin{equation}
\norm{\vect{y}_i - \vect{x}_i}_2 = \norm{\vect{y}_j - \vect{x}_j}_2.
\end{equation}
\end{remark}

An observation key to all further results is that Napoleon transformations of equilateral triangles are very simple. 
\begin{lemma}\label{lem.NapoleonEquilateral}
The inner Napoleon transformation $\NT{+}{}$ of any  triple $\vectbf{x}= \tr{\brl{\vect{x}_1, \vect{x}_2, \vect{x}_3}} \in \R^{3d}$ comprising the vertices of an equilateral triangle $\triangle_{\vectbf{x}}$ collapses it to the trivial triangle all of whose vertices are located at its centroid $\ctrd{\vectbf{x}}$,
\begin{equation} \label{eq.InnerNapoleonEquilateral}
\NT{+}{}\prl{\vectbf{x}} = \mat{1}_3 \otimes \ctrd{\vectbf{x}},
\end{equation}
whereas the outer Napoleon transformation $\NT{-}{}$ reflects the vertices of $\triangle_{\vectbf{x}}$ with respect to its centroid $\ctrd{\vect{x}}$, \footnote{Here, $\mat{1}_3$ is the $\R^3$ column vector of all ones, and $\cdot$ denotes the standard array product.}
\begin{equation}\label{eq.NapoleonReflection}
\NT{-}{}\prl{\vectbf{x}} = 2 \cdot \mat{1}_3 \otimes \ctrd{\vectbf{x}} - \vectbf{x}. 
\end{equation}
\end{lemma}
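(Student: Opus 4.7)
The plan rests on one geometric observation: for an equilateral triangle, the inwardly erected equilateral on every side coincides with the triangle itself. Concretely, I would verify this using the perpendicular-bisector construction from the proof of the previous lemma. If $\vectbf{x}$ is a positively oriented equilateral triple, then $\vect{x}_3 - \frac{1}{2}\prl{\vect{x}_1+\vect{x}_2}$ is perpendicular to $\vect{x}_2 - \vect{x}_1$, has length $\frac{\sqrt{3}}{2}\norm{\vect{x}_2 - \vect{x}_1}$, and points in the direction of $\RotMat_{\vectbf{x}}\prl{\vect{x}_2 - \vect{x}_1}$; hence $\vect{x}_3 = \frac{1}{2}\prl{\vect{x}_1+\vect{x}_2} + \frac{\sqrt{3}}{2}\RotMat_{\vectbf{x}}\prl{\vect{x}_2 - \vect{x}_1}$, which is exactly the new vertex produced by the inner Torricelli construction on the side through $\vect{x}_1$ and $\vect{x}_2$. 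By symmetry the analogous identity holds for the other two sides, so $\TT{+}{}\prl{\vectbf{x}} = \vectbf{x}$. The outer Torricelli construction differs only by a sign in front of the rotation term, so each outer vertex is the reflection of the opposite original vertex through the midpoint of its side, giving $\TT{-}{}\prl{\vectbf{x}} = 3\,\mat{1}_3 \otimes \ctrd{\vectbf{x}} - 2\vectbf{x}$.

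With these Torricelli expressions in hand, formula \refeqn{eq.Napoleon} reduces both claims to a short algebraic manipulation. Observe that the $i$-th block of $\mat{K}\vectbf{x}$ equals $\sum_{j\neq i}\vect{x}_j = 3\ctrd{\vectbf{x}} - \vect{x}_i$, so $\mat{K}\vectbf{x} = 3\,\mat{1}_3 \otimes \ctrd{\vectbf{x}} - \vectbf{x}$. Substituting $\TT{+}{}\prl{\vectbf{x}} = \vectbf{x}$ yields $\NT{+}{}\prl{\vectbf{x}} = \frac{1}{3}\prl{\mat{K}\vectbf{x} + \vectbf{x}} = \mat{1}_3 \otimes \ctrd{\vectbf{x}}$, which is \refeqn{eq.InnerNapoleonEquilateral}. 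Substituting $\TT{-}{}\prl{\vectbf{x}} = 3\,\mat{1}_3 \otimes \ctrd{\vectbf{x}} - 2\vectbf{x}$ instead gives $\NT{-}{}\prl{\vectbf{x}} = \frac{1}{3}\prl{6\,\mat{1}_3 \otimes \ctrd{\vectbf{x}} - 3\vectbf{x}} = 2\,\mat{1}_3 \otimes \ctrd{\vectbf{x}} - \vectbf{x}$, which is \refeqn{eq.NapoleonReflection}.

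There is no serious obstacle here; once one notices that the inwardly erected equilateral on each side of an equilateral triangle is just the original triangle, the rest follows from the closed-form expressions established in the previous lemma. The only subtlety worth flagging is the degenerate case in which all three vertices coincide: under the convention $\RotMat_{\vectbf{x}} = \mat{0}$ fixed in the footnote after \refeqn{eq.RotMat}, both formulas collapse consistently to $\mat{1}_3 \otimes \vect{x}_1$, so no separate argument is needed.
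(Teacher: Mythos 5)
Your proposal is correct and follows essentially the same route as the paper: establish $\TT{+}{}\prl{\vectbf{x}} = \vectbf{x}$ and $\TT{-}{}\prl{\vectbf{x}} = \mat{K}\vectbf{x} - \vectbf{x} = 3\,\mat{1}_3 \otimes \ctrd{\vectbf{x}} - 2\vectbf{x}$ for an equilateral triple (the paper phrases the outer case via the coincidence of midpoints, which is the same reflection-through-the-midpoint fact you use), then substitute into \refeqn{eq.Napoleon}. Your explicit verification of the perpendicular-bisector identity and the remark on the degenerate case are slightly more detailed than the paper's, but the argument is the same.
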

\begin{proof}
Observe that the inwardly erected triangle on any side of an equilateral triangle is equal to the equilateral triangle itself, i.e.  $ \TT{+}{}\prl{\vectbf{x}} = \vectbf{x}$, and so, by definition, one has \refeqn{eq.InnerNapoleonEquilateral}.
Alternatively, using \refeqn{eq.Napoleon}, one can obtain
\begin{equation}
\NT{+}{}\prl{\vectbf{x}} = \frac{1}{3}\prl{\mat{K}\vectbf{x} + \TT{+}{}\prl{\vectbf{x}}} = \frac{1}{3}\prl{\mat{K}\vectbf{x} + \vectbf{x}} = \mat{1}_3 \otimes \ctrd{\vectbf{x}},
\end{equation}
where $\mat{K}$ is defined as in \refeqn{eq.KLmat}.

Now consider   outwardly erected equilateral triangles on the sides of an equilateral triangle, and let $\vectbf{y} = \tr{\brl{\vect{y}_1, \vect{y}_2, \vect{y}_3}} = \TT{-}{}\prl{\vectbf{x}}$.
Note that each erected triangle has a common side with the original triangle. 
Since $\triangle_{\vectbf{x}}$ is equilateral, observe that the midpoint of the unshared vertices of an erected triangle and the original triangle is equal to the midpoint of their common sides, i.e. $\frac{1}{2}\prl{\vect{y}_1 + \vect{x}_1} = \frac{1}{2}\prl{\vect{\vect{x}_2} + \vectbf{x}_3}$ and so on.
Hence, we have $\TT{-}{}\prl{\vectbf{x}} = \mat{K}\vectbf{x} - \vectbf{x}$.
Thus, one can verify the result using \refeqn{eq.Napoleon} as
\begin{equation}
\NT{-}{}\prl{\vectbf{x}} = \frac{1}{3}\prl{\mat{K}\vectbf{x} + \TT{-}{}\prl{\vectbf{x}}}= \frac{1}{3}\prl{\mat{K}\vectbf{x} + \mat{K}\vectbf{x} - \vectbf{x}} = 2 \cdot \mat{1}_3 \otimes \ctrd{\vectbf{x}} - \vectbf{x}. \qedhere
\end{equation}
\end{proof}

Since the Napoleon transformation of any triangle results in an equilateral triangle, motivated from \reflem{lem.NapoleonEquilateral}, we now consider  the iterations of the Napoleon transformation. 
For any $k\geq 0$ let $\NT{\pm}{k}: \R^{3d} \rightarrow \R^{3d}$ denote the $k$-th Napoleon transformation defined to be
\begin{equation}
\NT{\pm}{k+1} \ldf \NT{\pm}{} \circ \NT{\pm}{k},
\end{equation}
where we set $\NT{\pm}{0} \ldf id$, and $id:\R^{3d} \rightarrow \R^{3d}$ is the identity map on $\R^{3d}$.

\smallskip

It is evident from  \reflem{lem.NapoleonEquilateral} that:
\begin{lemma} \label{lem.NapoleonIteration}
For any $\vectbf{x} \in \R^{3d}$ and $k\geq 1$,
\begin{equation}
\NT{+}{k+1}\prl{\vectbf{x}} = \mat{1}_3 \otimes \ctrd{\vectbf{x}}, \quad  \text{ and } \quad 
\NT{-}{k+2}\prl{\vectbf{x}} = \NT{-}{k}\prl{\vectbf{x}}.
\end{equation}
\end{lemma}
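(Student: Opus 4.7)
The plan is to combine two facts already available: (a) by Napoleon's theorem (invoked in the Introduction and implicit in the closed-form expression \refeqn{eq.Napoleon}), the triple $\NT{\pm}{}\prl{\vectbf{x}}$ forms an equilateral triangle for every $\vectbf{x} \in \R^{3d}$; and (b) Napoleon transformations preserve the centroid, so $\ctrd{\NT{\pm}{k}\prl{\vectbf{x}}} = \ctrd{\vectbf{x}}$ for all $k \geq 0$ (the Remark above). With these in hand, \reflem{lem.NapoleonEquilateral} can be applied to each subsequent iterate, and both claims reduce to one-line computations.

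For the inner case, fix $k \geq 1$ and set $\vectbf{y} \ldf \NT{+}{k}\prl{\vectbf{x}}$. Because $k \geq 1$, the triangle $\triangle_{\vectbf{y}}$ is equilateral, so the first part of \reflem{lem.NapoleonEquilateral} yields $\NT{+}{}\prl{\vectbf{y}} = \mat{1}_3 \otimes \ctrd{\vectbf{y}}$. Centroid preservation gives $\ctrd{\vectbf{y}} = \ctrd{\vectbf{x}}$, so
\begin{equation}
\NT{+}{k+1}\prl{\vectbf{x}} = \NT{+}{}\prl{\NT{+}{k}\prl{\vectbf{x}}} = \mat{1}_3 \otimes \ctrd{\vectbf{x}},
\end{equation}
as claimed.

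For the outer case, the same setup shows that $\vectbf{y} \ldf \NT{-}{k}\prl{\vectbf{x}}$ is equilateral for $k \geq 1$, so the second part of \reflem{lem.NapoleonEquilateral} says that $\NT{-}{}$ acts on $\vectbf{y}$ as reflection through its centroid, that is, $\NT{-}{}\prl{\vectbf{y}} = 2 \cdot \mat{1}_3 \otimes \ctrd{\vectbf{y}} - \vectbf{y}$. Applying this identity twice and using that point reflection is an involution — together with centroid preservation at each step — I would obtain
\begin{equation}
\NT{-}{k+2}\prl{\vectbf{x}} = 2\cdot \mat{1}_3 \otimes \ctrd{\vectbf{x}} - \NT{-}{k+1}\prl{\vectbf{x}} = 2\cdot \mat{1}_3 \otimes \ctrd{\vectbf{x}} - \prl{2\cdot \mat{1}_3 \otimes \ctrd{\vectbf{x}} - \NT{-}{k}\prl{\vectbf{x}}} = \NT{-}{k}\prl{\vectbf{x}}.
\end{equation}

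The one subtle point — the main ``obstacle'' if there is any — is ensuring that \reflem{lem.NapoleonEquilateral} is legitimately applicable to $\NT{\pm}{k}\prl{\vectbf{x}}$ for $k\geq 1$ even when the starting $\vectbf{x}$ is degenerate (collinear or trivial). This is where the bound $k \geq 1$ in the statement matters: after one application of $\NT{\pm}{}$ the image is equilateral by Napoleon's theorem (or trivial, in which case both identities hold vacuously with the centroid at the common point), and the iteration from then on takes place among equilateral triples for which the geometric reasoning of \reflem{lem.NapoleonEquilateral} applies verbatim.
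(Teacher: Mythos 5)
Your proof is correct and follows exactly the route the paper intends: the paper gives no explicit proof, stating only that the lemma ``is evident from Lemma~\ref{lem.NapoleonEquilateral},'' and your argument is precisely the natural expansion of that remark (equilaterality of the iterates from Napoleon's theorem, centroid preservation from the Remark, then one application of the collapse formula for the inner case and two applications of the involutive reflection formula for the outer case). Your attention to the degenerate/collinear starting triple is a welcome extra detail that the paper leaves implicit.
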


\noindent As a result, the basis of iterations of the Napoleon transformations consists of
$\NT{\pm}{}$ and $\NT{\pm}{2}$, whose explicit forms, except $\NT{-}{2}$, are given above. 
Using \refeqn{eq.Napoleon}  and \refeqn{eq.NapoleonReflection}, the  closed-form expression of the double outer Napolean transformation  $\NT{-}{2}$  can be obtained as:
\begin{lemma} \label{lem.DoubleNapoleon}
An arbitrary triple, $\vectbf{x} = \tr{\threevecT{\vect{x}_1}{\vect{x}_2}{\vect{x}_3}} \in \R^{3d}$ 
gives rise to the double outer Napoleon triangle, $\NT{-}{2} : \R^{3d} \rightarrow \R^{3d }$, according to the formula
\begin{equation} \label{eq.doubleNapoleon}
\NT{-}{2}\prl{\vectbf{x}} = \frac{2}{3}\vectbf{x} + \frac{1}{3}\TT{+}{}\prl{\vectbf{x}}.
\end{equation}
\end{lemma}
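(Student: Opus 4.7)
The plan is to use the fact that one outer Napoleon transformation always produces an equilateral triangle (the classical Napoleon's theorem), and then apply the reflection formula from Lemma~\ref{lem.NapoleonEquilateral} to that intermediate triangle. Concretely, I would set $\vectbf{y} \ldf \NT{-}{}(\vectbf{x})$, observe that $\triangle_{\vectbf{y}}$ is equilateral and, by the centroid-preservation property recorded in the Remark, has the same centroid as $\triangle_{\vectbf{x}}$. Then \refeqn{eq.NapoleonReflection} applied to $\vectbf{y}$ yields
\begin{equation*}
\NT{-}{2}\prl{\vectbf{x}} = \NT{-}{}\prl{\vectbf{y}} = 2 \cdot \mat{1}_3 \otimes \ctrd{\vectbf{x}} - \NT{-}{}\prl{\vectbf{x}}.
\end{equation*}

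Next, I would substitute the closed form $\NT{-}{}(\vectbf{x}) = \tfrac{1}{3}(\mat{K}\vectbf{x} + \TT{-}{}(\vectbf{x}))$ from \refeqn{eq.Napoleon}, and rewrite $\mat{1}_3 \otimes \ctrd{\vectbf{x}}$ purely in terms of $\mat{K}$ and $\vectbf{x}$. The key algebraic identity here is
\begin{equation*}
3 \cdot \mat{1}_3 \otimes \ctrd{\vectbf{x}} = \mat{K}\vectbf{x} + \vectbf{x},
\end{equation*}
which follows directly from the definitions of $\ctrd{\vectbf{x}}$ and $\mat{K}$ in \refeqn{eq.KLmat}. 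Substituting, I expect to obtain
\begin{equation*}
\NT{-}{2}\prl{\vectbf{x}} = \tfrac{2}{3}\vectbf{x} + \tfrac{1}{3}\mat{K}\vectbf{x} - \tfrac{1}{3}\TT{-}{}\prl{\vectbf{x}}.
\end{equation*}

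The final step is to collapse $\mat{K}\vectbf{x} - \TT{-}{}(\vectbf{x})$ into $\TT{+}{}(\vectbf{x})$. From the explicit expression \refeqn{eq.Torricelli}, the rotational pieces of $\TT{+}{}$ and $\TT{-}{}$ cancel in the sum, giving $\TT{+}{}(\vectbf{x}) + \TT{-}{}(\vectbf{x}) = \mat{K}\vectbf{x}$. Plugging this into the previous line immediately produces the desired formula \refeqn{eq.doubleNapoleon}. No step looks particularly obstructive; the only subtlety is to be sure that the centroid invariance and Napoleon's equilateral conclusion both apply regardless of whether the original triple $\vectbf{x}$ is collinear, so that the identity holds on all of $\R^{3d}$. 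This is handled by the convention fixing $\RotMat_{\vectbf{x}}$ on a chosen plane through $\vectbf{x}$, which makes $\NT{-}{}(\vectbf{x})$ equilateral (possibly degenerate) in every case.
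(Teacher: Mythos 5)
Your proposal is correct and follows essentially the same route as the paper's own proof: invoke Napoleon's theorem to see that $\NT{-}{}\prl{\vectbf{x}}$ is equilateral, apply the reflection identity \refeqn{eq.NapoleonReflection} together with centroid preservation, substitute \refeqn{eq.Napoleon}, and finish with the identities $3 \cdot \mat{1}_3 \otimes \ctrd{\vectbf{x}} = \mat{K}\vectbf{x} + \vectbf{x}$ and $\TT{+}{}\prl{\vectbf{x}} + \TT{-}{}\prl{\vectbf{x}} = \mat{K}\vectbf{x}$. Every step matches the paper's computation, so there is nothing to add.
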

\begin{proof}
By Napoleon's theorem, $\NT{-}{}\prl{\vectbf{x}}$ is an equilateral triangle. 
Using \refeqn{eq.Napoleon} and \reflem{lem.NapoleonEquilateral}, one can obtain the result as follows:

\noindent
\begin{align}
\!\!\NT{-}{2}\prl{\vectbf{x}} &\sqz{=} \NT{-}{}\prl{\NT{-}{}\prl{\vectbf{x}}\!} \sqz{=} 2 \sqz{\cdot} \mat{1}_3 \sqz{\otimes} \ctrd{x} \sqz{-} \NT{-}{}\prl{\vectbf{x}} \sqz{=} 2 \sqz{\cdot} \mat{1}_3 \sqz{\otimes} \ctrd{x} \sqz{-} \frac{1}{3} \prl{\mat{K}\vectbf{x} \sqz{+} \TT{-}{}\prl{\vectbf{x}}\!}\!,\! \!\!\\
& \hspace{-5mm}\sqz{=} \frac{2}{3}\prl{\mat{K}\vectbf{x} \sqz{+} \vectbf{x}}  \sqz{-} \frac{1}{3} \prl{\mat{K}\vectbf{x} \sqz{+} \TT{-}{}\prl{\vectbf{x}}\!} \sqz{=} \frac{2}{3}\vectbf{x} \sqz{+}\frac{1}{3} \prl{\mat{K}\vectbf{x} \sqz{-} \TT{-}{}\prl{\vectbf{x}}\!}
\sqz{=} \frac{2}{3}\vectbf{x} \sqz{+} \frac{1}{3}\TT{+}{}\prl{\vectbf{x}}\!,\!\!\!
\end{align}
where $\mat{K}$ is defined as in \refeqn{eq.KLmat}.
\end{proof}

\noindent Notice that  $\NT{-}{2}\prl{\vectbf{x}}$ is a convex combination of  $\vectbf{x}$ and  $\TT{+}{}\prl{\vectbf{x}}$, see \reffig{fig.NapoleonTorricelli}.

\bigskip

\begin{figure}
\centering
\vspace{5mm}
\includegraphics[width=0.8 \textwidth]{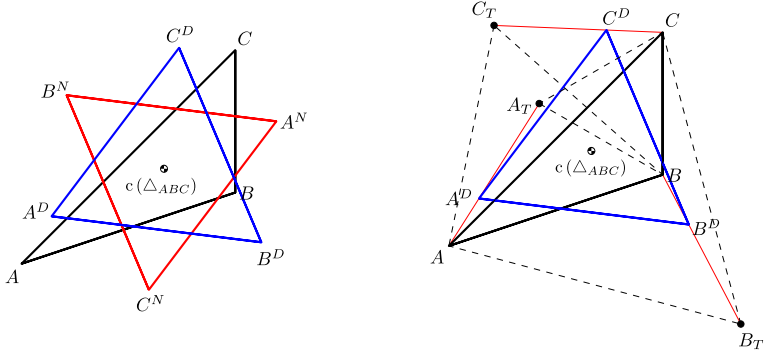} 
\caption{(left) Outer, $\triangle_{A^NB^NC^N}$,  and double outer, $\triangle_{A^DB^DC^D}$, Napoleon transformations of a triangle $\triangle_{ABC}$.
(right) The double outer Napoleon triangle $\triangle_{A^DB^DC^D}$ is a convex combination of the original triangle $\triangle_{ABC}$ and its inner Torricelli configuration $\triangle_{A_TB_TC_T}$. }
\label{fig.NapoleonTorricelli}
\end{figure}

\section{Optimality of Napoleon Transformations.}
\label{app.NapoleonOptimality}

To best of our knowledge, the Napoleon transformation $\NT{\pm}{}$ is mostly recognized as being a function into the space of equilateral triangles.
In addition to this inherited property, $\NT{\pm}{2}$ has an optimality property that is not immediately obvious.
Although the double inner Napoleon transformation $\NT{+}{2}$ is not  really  that interesting to work with, it gives a hint about the optimality of $\NT{-}{2}$: for any given triangle $\NT{+}{2}$ yields a trivial triangle all of whose vertices are located at the centroid of the given triangle which, by definition, minimizes the sum of squared distances to the vertices of the original triangle.      
Surprisingly, one has a similar optimality property for $\NT{-}{2}$:

\begin{theorem} \label{thm.NapoleonOptimality}
The double outer Napoleon transformation $ \NT{-}{2}\prl{\vectbf{x}}$ \refeqn{eq.doubleNapoleon} yields the equilateral triangle  most closely aligned with
$\triangle_{\vectbf{x}}$ in the sense that it minimizes the total sum of squared distances between corresponding vertices.
That is to say, for any $\vectbf{x}= \tr{\brl{\vect{x}_1, \vect{x}_2, \vect{x}_3}} \in \R^{3d}$, $\NT{-}{2}\prl{\vectbf{x}}$ is an optimal solution  of the following problem 
\begin{equation}\label{eq.NapoleonOptimization}
\begin{split}
\text{minimize } &\hspace{2mm}\sum_{i = 1}^{3}\norm{\vect{x}_i - \vect{y}_i}^2  \\
\text{subject to} & \hspace{2mm}\norm{\vect{y}_1 - \vect{y}_2}^2 = \norm{\vect{y}_1 - \vect{y}_3}^2 =  \norm{\vect{y}_2 - \vect{y}_3}^2  
\end{split}
\end{equation}
where $\vectbf{y} = \tr{\brl{\vect{y}_1, \vect{y}_2, \vect{y}_3}} \in \R^{3d}$.
Further, if $\vectbf{x}$ is non-collinear, then  \refeqn{eq.NapoleonOptimization} has a unique solution.
\end{theorem}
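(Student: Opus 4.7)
The plan is to reduce \refeqn{eq.NapoleonOptimization} to a matrix Procrustes problem via a real discrete Fourier decomposition indexed by $\mathbb{Z}/3\mathbb{Z}$. Every triple $\vectbf{y} \in \R^{3d}$ admits the unique representation
\[ \vect{y}_k = \vect{a} + \vect{p} \cos\theta_k + \vect{q} \sin\theta_k, \qquad \theta_k \ldf 2\pi(k-1)/3, \]
with $\vect{a}, \vect{p}, \vect{q} \in \R^d$. A direct expansion of the three pairwise squared distances $\norm{\vect{y}_i - \vect{y}_j}^2$ shows that $\triangle_{\vectbf{y}}$ is equilateral if and only if $\norm{\vect{p}} = \norm{\vect{q}}$ and $\innerprod{\vect{p}}{\vect{q}} = 0$. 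Writing $\vectbf{x}$ in the same form with coefficients $\vect{a}_x, \vect{p}_x, \vect{q}_x$ and using the elementary identities $\sum_k \cos\theta_k = \sum_k \sin\theta_k = \sum_k \cos\theta_k \sin\theta_k = 0$ and $\sum_k \cos^2\theta_k = \sum_k \sin^2\theta_k = 3/2$, the objective collapses to
\[ \sum_{k=1}^{3} \norm{\vect{x}_k - \vect{y}_k}^2 = 3\norm{\vect{a}_x - \vect{a}}^2 + \tfrac{3}{2}\norm{\vect{p}_x - \vect{p}}^2 + \tfrac{3}{2}\norm{\vect{q}_x - \vect{q}}^2, \]
so the optimal centroid is trivially $\vect{a} = \vect{a}_x = \ctrd{\vectbf{x}}$.

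Next, I would minimize $\norm{\mat{X} - \mat{B}}_F^2$ where $\mat{X} \ldf [\vect{p}_x, \vect{q}_x]$ and $\mat{B} \ldf [\vect{p}, \vect{q}] \in \R^{d\times 2}$ satisfies $\tr{\mat{B}}\mat{B} = r^2 \mat{I}_2$ for some $r \geq 0$. From the singular value decomposition $\mat{X} = \mat{U} \Sigma \tr{\mat{V}}$ with $\Sigma = \diag{\sigma_1, \sigma_2}$, the trace bound $\mathrm{tr}(\tr{\mat{X}}\mat{B}) \leq r(\sigma_1 + \sigma_2)$ is saturated at $\mat{B} = r \mat{U} \tr{\mat{V}}$, and minimizing over $r$ yields $r = (\sigma_1 + \sigma_2)/2$. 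The columns of this optimal $\mat{B}$ lie in the column space of $\mat{U}$, which equals the two-dimensional span of $\vect{p}_x$ and $\vect{q}_x$ when $\vectbf{x}$ is non-collinear. Hence the optimal equilateral always lies in the affine plane of $\vectbf{x}$, reducing the problem to its planar version.

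For the planar subproblem, I would identify the plane with $\C$ and use the complex DFT $\vect{x}_k = \hat{x}_0 + \hat{x}_1 \omega^{k-1} + \hat{x}_2 \omega^{2(k-1)}$ with $\omega \ldf e^{2\pi i/3}$. The equilateral conditions translate to $\hat{y}_1 \hat{y}_2 = 0$, splitting equilaterals into two orientation subspaces; by Parseval, the orthogonal projections onto these subspaces achieve squared distances $3|\hat{x}_2|^2$ and $3|\hat{x}_1|^2$ respectively, and a direct calculation shows $|\hat{x}_1|^2 - |\hat{x}_2|^2$ has the same sign as the signed area of $\triangle_{\vectbf{x}}$, so the closer subspace is precisely the one matching the orientation of $\vectbf{x}$. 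Using the complex identity $(\TT{+}{}(\vectbf{x}))_k = -\omega\vect{x}_{k+1} - \omega^2 \vect{x}_{k+2}$ (indices mod $3$, for positively oriented $\vectbf{x}$), one computes DFT coefficients $(\hat{x}_0, \hat{x}_1, -2\hat{x}_2)$ for $\TT{+}{}(\vectbf{x})$; combining via \refeqn{eq.doubleNapoleon} then confirms that $\NT{-}{2}(\vectbf{x})$ has DFT $(\hat{x}_0, \hat{x}_1, 0)$, exactly the optimal projection (the negatively oriented case is symmetric).

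The main obstacle will be tracking the orientation: $\RotMat_{\vectbf{x}}$ adapts to the (ambiguous when collinear) orientation of $\vectbf{x}$, and the identification of $\NT{-}{2}(\vectbf{x})$ with the correct orientation subspace must be handled via a brief case analysis. Uniqueness when $\vectbf{x}$ is non-collinear follows because the plane of $\vectbf{x}$ is uniquely determined and, within that plane, either $\sigma_1 > \sigma_2$ (making the SVD-optimal $\mat{B}$ essentially unique) or $\sigma_1 = \sigma_2$; the latter occurs exactly when $\vectbf{x}$ is already equilateral, in which case $\NT{-}{2}(\vectbf{x}) = \vectbf{x}$ is trivially the unique optimum.
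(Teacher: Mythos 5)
Your proposal is correct, and it reaches the theorem by a genuinely different route from the paper's. The paper first argues via Lagrange multipliers that any optimum lies in the plane of $\triangle_{\vectbf{x}}$, then parametrizes planar equilateral triangles by two vertices plus an orientation bit $k\in\{-1,+1\}$, solves the resulting unconstrained convex quadratic in closed form for each $k$, recognizes the two candidates as $\frac{2}{3}\vectbf{x}+\frac{1}{3}\TT{\pm}{}\prl{\vectbf{x}}$, and selects $k=+1$ by comparing distances to the inner and outer Torricelli configurations. You instead diagonalize everything with the order-$3$ discrete Fourier transform: the centroid mode splits off, the remaining modes turn the constraint into the scaled-Stiefel condition $\tr{\mat{B}}\mat{B}=r^2\mat{I}_2$ solved by SVD, and in the plane the feasible set becomes the union of the two linear subspaces $\hat y_2=0$ and $\hat y_1=0$, so the optimum is an orthogonal projection and the orientation is decided by the sign of $|\hat x_1|^2-|\hat x_2|^2$, i.e., the signed area. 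Your key computations check out: the equilaterality criterion $\norm{\vect{p}}=\norm{\vect{q}}$, $\innerprod{\vect{p}}{\vect{q}}=0$; the DFT of $\TT{+}{}\prl{\vectbf{x}}$ being $(\hat x_0,\hat x_1,-2\hat x_2)$; hence $\NT{-}{2}\prl{\vectbf{x}}$ having DFT $(\hat x_0,\hat x_1,0)$, which is exactly the nearer projection for positively oriented $\vectbf{x}$. What your route buys: the SVD step constructs the global minimizer directly (the paper's Lagrange conditions only locate critical points and tacitly assume existence and constraint qualification), it makes transparent why the answer is the convex combination \refeqn{eq.doubleNapoleon} (it simply zeroes one Fourier coefficient), and it generalizes to $n$-gons and higher modes; the paper's route stays in elementary real linear algebra and needs no Fourier machinery. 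Two places to tighten when you write it up: in the collinear case $\sigma_2=0$ the Procrustes maximizer is genuinely non-unique, so you must check that $\NT{-}{2}\prl{\vectbf{x}}$, for whichever plane and orientation are selected in \refeqn{eq.nt}, lies in the optimal set; and replace ``essentially unique'' by the precise statement that the maximizer of $\mathrm{tr}(\tr{\mat{X}}\mat{B})$ together with the optimal $r=(\sigma_1+\sigma_2)/2$ is unique whenever $\sigma_2>0$, which is exactly non-collinearity of $\vectbf{x}$.
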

\begin{proof}
Using the method of Lagrange multipliers \cite{bertsekas_1999},  we first show that an optimal solution of \refeqn{eq.NapoleonOptimization} lies in the plane containing the triangle $\triangle_{\vectbf{x}}$. 
Then, to show the result, we solve  \refeqn{eq.NapoleonOptimization} using a proper parametrization of equilateral triangles in $\R^2$.

The Lagrangian formulation of \refeqn{eq.NapoleonOptimization} minimizes

\noindent
\begin{align}
L\prl{\vectbf{y}, \lambda_1, \lambda_2} & = \sum_{i = 1}^{3}\norm{\vect{x}_i- \vect{y}_i}_2^2 + \lambda_1\prl{\norm{\vect{y}_1 -\vect{y}_2 }_2^2 - \norm{\vect{y}_1 -\vect{y}_3 }_2^2} \nonumber \\
&\hspace{35mm} + \lambda_2\prl{\norm{\vect{y}_1 -\vect{y}_2 }_2^2 - \norm{\vect{y}_2 -\vect{y}_3 }_2^2},
\end{align}
where $\lambda_1, \lambda_2 \in \R$ are Lagrange multipliers.    
Necessary conditions for the locally optimal solutions of \refeqn{eq.NapoleonOptimization} is \footnote{Here, $\nabla_{\vectbf{y}}$ denotes the gradient taken with respect to the coordinate $\vectbf{y}$.}
\begin{equation}
\nabla_{\vectbf{y}} L\prl{\vectbf{y}, \lambda_1, \lambda_2} = 2\brl{
\begin{array}{l}
\prl{\vect{y}_1 - \vect{x}_1} + \lambda_1\prl{\vect{y}_3 - \vect{y}_2} + \lambda_2 \prl{\vect{y}_1 - \vect{y}_2} \\
\prl{\vect{y}_2 - \vect{x}_2} + \lambda_1\prl{\vect{y}_2 - \vect{y}_1} + \lambda_2 \prl{\vect{y}_3 - \vect{y}_1}\\
\prl{\vect{y}_3 - \vect{x}_3} - \lambda_1\prl{\vect{y}_3 - \vect{y}_1} - \lambda_2 \prl{\vect{y}_3 - \vect{y}_2}
\end{array}
} = 0,
\end{equation} 
from which one can conclude that an optimal solution of \refeqn{eq.NapoleonOptimization} lies in the plane containing $\triangle_{\vectbf{x}}$.
Accordingly, without loss of generality, suppose that $\triangle_{\vectbf{x}}$ is a positively oriented triangle in $\R^2$, i.e.   its vertices  are in counter-clockwise order in $\R^2$. 

In general, an equilateral triangle $\triangle_{\vectbf{y}}$ in $\R^2$ with vertices $\vectbf{y} = \tr{\brl{\vect{y}_1, \vect{y}_2, \vect{y}_3}} \in \R^{6}$  can be uniquely parametrized  using  two of its vertices, say $\vect{y}_1$ and $\vect{y}_2$, and a binary variable $k \in \crl{-1,+1}$ specifying the orientation of $\triangle_{\vectbf{y}}$; for instance, $k = +1$ if $\triangle_{\vectbf{y}}$ is positively oriented, and so on.  
Consequently, the remaining vertex, $\vect{y}_3$, can be located as 
\begin{equation}
\vect{y}_3 = \frac{1}{2}\prl{\vect{y}_1+\vect{y}_2} + k \frac{\sqrt{3}}{2} \RotMat_{\pi/2} (\vect{y}_2- \vect{y}_1), \label{eq.TriangleParametrization}
\end{equation}
where  $\RotMat_{\pi/2} =\twomatrix{0}{-1}{1}{0}$ is the rotation matrix defining a rotation by $\pi/2$.
%

Hence, one can rewrite the optimization problem \refeqn{eq.NapoleonOptimization}  in term of new parameters as an unconstrained optimization problem as: for $\vect{y}_1, \vect{y}_2 \in \R^2$ and $k \in \crl{-1,1}$,
\begin{equation}
\text{minimize } \hspace{2mm} \norm{\vect{x}_1 - \vect{y}_1}_2^2 + \norm{\vect{x}_2 - \vect{y}_2}_2^2 + \norm{\vect{x}_3 - \mat{M} \vect{y_1} - \tr{\mat{M}}\vect{y}_2}_2^2
 \label{eq.NapoleonOptimization2}
\end{equation}
where   $\mat{M} \ldf \frac{1}{2}\mat{I} - k \frac{\sqrt{3}}{2} \RotMat_{\pi/2}$,  and $\mat{I}$ is the $2\times 2$ identity matrix. 
Note that $\mat{M} + \tr{\mat{M}} = \mat{I}$, $\tr{\mat{M}}\mat{M} = \mat{M}\tr{\mat{M}} = \mat{I}$ and $\mat{M}^2 = -\tr{\mat{M}}$.
 
For a fixed $k \in \crl{-1,1}$,  \refeqn{eq.NapoleonOptimization2} is a convex optimization problem of $\vect{y}_1$ and $\vect{y}_2$ because every norm on $\R^n$ is convex and compositions of convex functions with affine transformations preserve convexity \cite{boyd_2004}.
Hence, a global optimal solution of \refeqn{eq.NapoleonOptimization2} occurs where the gradient of the objective function is zero at,  
\begin{equation}
\twomatrix
{\prl{\mat{I} + \tr{\mat{M}}\mat{M}}}
{\tr{\prl{\mat{M}^2}}}
{\mat{M}^2}
{\prl{\mat{I} + \mat{M}\tr{\mat{M}}}}
\twovec{\vect{y}_1}{\vect{y_2}}  = 
\twovec{\vect{x}_1 + \tr{\mat{M}}\vect{x}_3}{\vect{x}_2 + \mat{M}\vect{x}_3},
\end{equation}
which simplifies to
\begin{equation}
\twomatrix
{2\mat{I}}
{-\mat{M}}
{-\tr{\mat{M}}}  
{2\mat{I}}
\twovec{\vect{y}_1}{\vect{y}_2}  = 
\twovec{\vect{x}_1 + \tr{\mat{M}}\vect{x}_3}{\vect{x}_2 + \mat{M}\vect{x}_3}. \label{eq.NapoleonOptimality}
\end{equation} 
Note that the objective function, $f\prl{\vectbf{y}}$, is strongly convex since its Hessian, $\nabla^2f\prl{\vectbf{y}}$,  satisfies
\begin{equation}
\nabla^2f\prl{\vectbf{y}}= \twomatrix
{2\mat{I}}
{-\mat{M}}
{-\tr{\mat{M}}}  
{2\mat{I}} \succeq \mat{I},
\end{equation} 
which means that for a fixed $k \in \crl{-1, +1}$ the optimal solution of \refeqn{eq.NapoleonOptimization2} is unique.  

Now observe that
\begin{equation}
\frac{1}{3}
\twomatrix
{2\mat{I}}
{\mat{M}}
{\tr{\mat{M}}} 
{2\mat{I}}
\twomatrix
{2\mat{I}}
{-\mat{M}}
{-\tr{\mat{M}}} 
{2\mat{I}}
 = 
\twomatrix{\mat{I}}{\mat{0}}{\mat{0}}{\mat{I}},  
\end{equation}
hence the solution of linear equation  \refeqn{eq.NapoleonOptimality} is

\noindent
\begin{align}
\!\twovec{\vect{y}_1}{\vect{y}_2} &\sqz{=} \frac{1}{3}\!\twomatrix
{2\mat{I}}
{\mat{M}}
{\tr{\mat{M}}} 
{2\mat{I}}
\!\twovec{\vect{x}_1 \sqz{+} \tr{\mat{M}}\vect{x}_3}{\vect{x}_2 \sqz{+} \mat{M}\vect{x}_3} \sqz{=} \frac{1}{3}\!\twovec{2\vect{x}_1 \sqz{+} 2\tr{\mat{M}}\vect{x}_3 \sqz{+} \mat{M}\vect{x}_2 \sqz{+} \mat{M}^2 \vect{x}_3}
{\tr{\mat{M}}\vect{x}_1 \sqz{+} \tr{\prl{\mat{M}^2}}\vect{x}_3 \sqz{+} 2\vect{x}_2 \sqz{+} 2 \mat{M} \vect{x}_3}\!,\! \\
&\sqz{=} \frac{1}{3} \!\twovec{2\vect{x}_1 \sqz{+} \tr{\mat{M}}\vect{x}_3 \sqz{+}\mat{M}\vect{x}_2}{2\vect{x}_2 \sqz{+} \tr{\mat{M}}\vect{x}_1 \sqz{+} \mat{M}\vect{x}_3 }
\sqz{=} \twovec{\frac{1}{2}\prl{\vect{x}_1 \sqz{+} \frac{\vect{x}_1 + \vect{x}_2 + \vect{x}_3}{3}} \sqz{+} k\frac{1}{2\sqrt{3}} \RotMat_{\pi/2}\prl{\vect{x}_3 \sqz{-} \vect{x}_2}}
{\frac{1}{2}\prl{\vect{x}_2 \sqz{+} \frac{\vect{x}_1 + \vect{x}_2 + \vect{x}_3}{3}} \sqz{+} k\frac{1}{2\sqrt{3}} \RotMat_{\pi/2}\prl{\vect{x}_1 \sqz{-} \vect{x}_3}}\!.\!
\end{align}
Here, substituting $\vect{y}_1$ and $\vect{y}_2$ back into \refeqn{eq.TriangleParametrization} yields
\begin{equation}
\vect{y}_3 = \frac{1}{2}\prl{\vect{x}_3 + \frac{\vect{x}_1 + \vect{x}_2 + \vect{x}_3}{3}} + k\frac{1}{2\sqrt{3}} \RotMat_{\pi/2}\prl{\vect{x}_2 - \vect{x}_1} .
\end{equation}
Thus, overall, we have
\begin{equation}
\vectbf{y} \sqz{=} \frac{2}{3}\vectbf{x} \sqz{+} \frac{1}{3}\prl{\frac{1}{2}\mat{K}\vectbf{x} \sqz{+} k\frac{\sqrt{3}}{2}\prl{\mat{I}_3 \otimes \mat{R}_{\pi/2}}\mat{L} \vectbf{x}} \sqz{=} \left \{
\begin{array}{@{}l@{}l@{}}
\frac{2}{3}\vectbf{x} + \frac{1}{3}\TT{+}{}\prl{\vectbf{x}} & \text{, if } k = +1, \\[1mm]
\frac{2}{3}\vectbf{x} + \frac{1}{3}\TT{-}{}\prl{\vectbf{x}} & \text{, if } k = -1.
\end{array}
\right. \!\!
\end{equation}
where $\mat{K}$ and $\mat{L}$ are defined as in \refeqn{eq.KLmat}.
Recall that $\triangle_{\vectbf{x}}$ is assumed to be positively oriented, i.e.  $\RotMat_{\vectbf{x}} = \RotMat_{\pi/2}$, and so  it is convenient to have the results in terms of Torricelli transformations $\TT{\pm}{}$ \refeqn{eq.Torricelli}.
As a result, the difference of $\vectbf{y}$ and $\vectbf{x}$ is simply given by
\begin{equation}
\vectbf{y} - \vectbf{x} = \left \{
\begin{array}{ll}
\frac{1}{3}\prl{\TT{+}{}\prl{\vectbf{x}} - \vectbf{x}}  \text{, if } k = +1, \\[1mm]
\frac{1}{3}\prl{\TT{-}{}\prl{\vectbf{x}} - \vectbf{x}}  \text{, if } k = -1.
\end{array}
\right .
\end{equation}

Finally, one can easily verify that the optimum value of $k$ is equal to $+1$ since the distance of  $\vectbf{x}$ to  its inner Torricelli configuration $\TT{+}{}\prl{\vectbf{x}}$ is always less than or equal to its distance to the outer Torricelli configuration $\TT{-}{}\prl{\vectbf{x}}$. 
Here, the equality only holds if $\vectbf{x}$ is collinear.
Thus, an optimal solution of \refeqn{eq.NapoleonOptimization} coincides with the double outer Napoleon transformation, $\NT{-}{2}\prl{\vectbf{x}}$ \refeqn{eq.doubleNapoleon}, and it is the  unique solution of \refeqn{eq.NapoleonOptimization} if $\vectbf{x}$ is non-collinear. 
\end{proof}

As a final remark we would like to note that our particular interest in the optimality of Napoleon triangles comes from our research on coordinated robot navigation, where a group of robots require to interchange their (structural) adjacencies through a minimum cost configuration determined by the double outer Napoleon transformation~\cite{arslan_guralnik_kod_WAFR2014}.

\begin{acknowledgment}{Acknowledgment.}
We would like to thank Dan P. Guralnik for the numerous discussions and kind feedback.
This work was funded in part by the Air Force Office of Science
Research under the MURI FA9550-10-1-0567.
\end{acknowledgment}

\bibliographystyle{ieeetr}
\bibliography{napoleon}

\vfill\eject

\end{document}